\newtheorem{nts}{Note to self}
\newcommand{\N}{\mathbb{N}}
\newcommand{\cF}{\mathcal{F}}
\newcommand{\s}{{\bf s}}
\title{The distribution of points on superelliptic curves over finite fields}
\author{GilYoung Cheong, Melanie Matchett Wood, and Azeem Zaman}
\address{Department of Mathematics\\
University of Wisconsin-Madison \\ 480 Lincoln Drive \\
Madison, WI 53705 USA\\
and
American Institute of Mathematics\\360 Portage Ave \\
Palo Alto, CA 94306-2244 USA} 
\email{mmwood@math.wisc.edu}
\begin{document}

\begin{abstract}
We give the distribution of points on smooth superelliptic curves over a fixed finite field, as their degree goes to infinity.
We also give the distribution of points on smooth $m$-fold cyclic covers of the line, for any $m$, as the
degree of their superelliptic model goes to infinity.  This builds on previous work of Kurlberg, Rudnick, Bucur, David, Feigon, and Lal{\'{\i}}n for $p$-fold cyclic covers, but the limits taken differ slightly and the resulting distributions are interestingly different.
\end{abstract}
\maketitle

\section{Introduction}

For a family of smooth curves over a fixed finite field $\F_q$, it is natural to ask how many $\F_q$ points those curves have on average, or more precisely, what distribution of points one obtains from a random curve in the family.  
This question has been studied as the genus, degree, or related invariants go to infinity in many cases including: hyperelliptic curves \cite{kurlberg-rudnick}, cyclic trigonal curves \cite{bdfl-trigonal}, cyclic $p$-fold curves \cite{bdfl-pfold}, plane
curves \cite{bdfl-planar}, complete intersections in projective space \cite{bucur-kedlaya}, trigonal curves \cite{wood-trigonal},
and curves in Hirzebruch surfaces \cite{erman-wood-semiample}.

In this paper, we give the distribution of points on smooth superelliptic (affine) curves, $C_f$, given by equations
$$
y^m=f(x)
$$
for some $f(x)\in\F_q[x]$, as the degree of $f$ goes to infinity.  We also give the distribution for the normalizations of
(possibly singular) super-elliptic curves, which for $q\equiv 1 \pmod{m}$ is exactly the case of $m$-fold cyclic covers of the line (c.f. with the work on prime degree cyclic covers cited above).  
One important advance in our work is that the normalizations may have different numbers of points from their singular models,
and it is general difficult to write down the smooth curves explicitly.  The above cited works have all counted points on curves
with explicitly given equations.  In Section~\ref{S:ptsonnorm} of this paper, we relate the number of points on a smooth curve to its explicitly given superelliptic model.  

In contrast to studying a similar question when a family of fixed genus is chosen and $q\ra\infty$, when the philosophy
of Katz-Sarnak \cite{katz-sarnak-a} suggests that the limit distributions should be predicted by a certain group of random matrices;
in the large genus limit for fixed $q$, there is no general conjectural picture of what one should expect.  Thus it is important to have many examples of different families exhibiting different phenomena.  
In the cases we study,  we obtain a particularly interesting contrast to \cite{bdfl-trigonal,bdfl-pfold}, as we count the same cyclic
covers of the line, but with a different invariant going to infinity, and obtain different distributions.  

Throughout, $m$ will be fixed and $(q,m)=1$.
We write $\F_q[x]_d$ to denote the degree $d$ polynomials in $\F_q[x]$.
Note that $C_f$ is smooth if and only if $f(x)$ is square-free, for which curves we have the following result.
\begin{theorem}\label{T:sqfree}
Let  $(q,m)=1$.  Then
 $$
\lim_{d\ra\infty} \frac{\#\{f\in \F_q[x]_d | {C_f} \textrm{ smooth with $k$ $\F_q$ points} \}}{\# \{f\in \F_q[x]_d | {C_f} \textrm{ smooth} \}}=
\Prob(\sum_{i=1}^q X_i=k ),
$$
where the $X_{j}$ are independent and identically distributed random variables with
$$
X_{j}
	=
	\begin{cases}
		0 \text{ with probability }	\left(1-\frac{1}{(m,q-1)} \right)\frac{1}{1+q^{-1}}, \\
		1 \text{ with probability }	\frac{q^{-1}}{1+q^{-1}}, \\
		(m,q-1) \text{ with probability } \frac{1}{(m,q-1)} \frac{1}{1+q^{-1}}.
	\end{cases},
$$
where when $(m,q-1)=1$ the last two probabilities are added to give $\Prob(X_j=1)$.
We have the same results if we restrict to $f$ such that $C_f$ is irreducible or geometrically irreducible.
\end{theorem}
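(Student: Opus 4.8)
The plan is to write the number of $\F_q$-points of the affine curve $C_f$ as a sum of local contributions indexed by $\F_q$, to establish that these become independent and equidistributed as $\deg f\ra\infty$, and then to show that conditioning on squarefreeness only modifies the local distribution at the rational points $x=a$. Throughout write $n=(m,q-1)$, so that the image of $y\mapsto y^m$ on $\F_q^{\times}$ consists of the $(q-1)/n$ nonzero $m$-th powers, each having exactly $n$ preimages. For $a\in\F_q$ let $X_a(f)=\#\{y\in\F_q:y^m=f(a)\}$ be the number of points of $C_f$ lying over $x=a$; then $N(f):=\#C_f(\F_q)=\sum_{a\in\F_q}X_a(f)$, where $X_a(f)$ equals $1$ if $f(a)=0$, equals $n$ if $f(a)$ is a nonzero $m$-th power, and equals $0$ otherwise. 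Thus $X_a(f)$ depends only on $f(a)$, but in order to detect squarefreeness at $x=a$ I will track the richer datum $f\bmod (x-a)^2$.

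First I would prove the equidistribution statement. Since the moduli $(x-a)^2$ for $a\in\F_q$ are pairwise coprime of total degree $2q$, the Chinese Remainder Theorem identifies $\F_q[x]/\prod_a(x-a)^2$ with $\prod_a \F_q[x]/(x-a)^2\cong(\F_q^2)^q$, and for $d\geq 2q$ the reduction map from $\F_q[x]_d$ to this product ring is surjective with fibres of equal size up to the negligible correction coming from fixing the leading coefficient. Hence, as $d\ra\infty$, the tuple $(f\bmod(x-a)^2)_{a\in\F_q}$ becomes equidistributed, and in particular the unconditioned local data at distinct points $a$ become independent and uniform on $\F_q^2$.

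The key step is to understand the effect of conditioning on $f$ squarefree, i.e.\ on $C_f$ smooth. Writing squarefreeness as the Euler-product condition $\pi^2\nmid f$ over all monic irreducibles $\pi$, I separate the rational primes $\pi=x-a$ ($a\in\F_q$) from the rest. At a rational prime the condition $(x-a)^2\nmid f$ is exactly that $f\bmod(x-a)^2$ be nonzero, which is data we are already tracking; the remaining conditions $\pi^2\nmid f$ for $\pi\nmid M:=\prod_a(x-a)^2$ are supported on primes coprime to $M$. Via a squarefree sieve one shows that, in the limit $d\ra\infty$, the density of $f$ satisfying these remaining conditions is independent of the prescribed residue $f\bmod M$, so this common factor cancels in the conditional probability. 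The upshot is that conditioning $N(f)$ on $C_f$ smooth is, in the limit, the same as conditioning each $f\bmod(x-a)^2$ to be a uniformly random \emph{nonzero} element of $\F_q^2$, independently over $a\in\F_q$. Controlling the error terms in this sieve uniformly over the finitely many local conditions is the main obstacle, and is exactly the type of estimate carried out in the works cited above.

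It then remains to compute the resulting local law and to dispose of the irreducibility clause. Under the conditional model, $f\bmod(x-a)^2=c_0+c_1(x-a)$ is uniform on $\F_q^2\setminus\{0\}$, so $\Prob(f(a)=0)=\tfrac{q-1}{q^2-1}=\tfrac{1}{q+1}$ with $X_a=1$, while $f(a)$ is a nonzero $m$-th power with probability $\tfrac{q\cdot(q-1)/n}{q^2-1}=\tfrac{1}{n}\cdot\tfrac{1}{1+q^{-1}}$ with $X_a=n$, and the complementary event $X_a=0$ has probability $(1-\tfrac1n)\tfrac{1}{1+q^{-1}}$; since $\tfrac{1}{q+1}=\tfrac{q^{-1}}{1+q^{-1}}$, these are precisely the laws of the $X_j$ in the statement (with the $n=1$ collapse handled as noted). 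As the $X_a$ are independent, $N(f)$ converges in distribution to $\sum_{i=1}^q X_i$, giving the displayed limit. Finally, for the irreducibility assertions I would observe that a nonconstant squarefree $f$ has a simple root, so it is not a $p$-th power in $\overline{\F_q}(x)$ for any prime $p\mid m$, nor does it lie in $-4\,\overline{\F_q}(x)^4$ when $4\mid m$; hence by the Vahlen--Capelli criterion $y^m-f$ is irreducible over $\overline{\F_q}(x)$ and $C_f$ is geometrically irreducible. Thus for $d\geq 1$ every smooth $C_f$ is automatically (geometrically) irreducible, so imposing that restriction does not change the family and the same limit holds.
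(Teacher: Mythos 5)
Your proposal is correct, and its analytic core is the same as the paper's: both reduce the theorem to counting squarefree polynomials of large degree subject to local conditions at the $q$ rational points, via M\"obius inclusion--exclusion over square divisors $f_2^2\mid f$, counting polynomials satisfying linear conditions, and summing an Euler product. The packaging, however, differs in two ways worth recording. First, you condition on the residues $f \bmod (x-a)^2$ via CRT and show the conditional law given squarefreeness becomes uniform on nonzero residues, independently over $a\in\F_q$; the paper instead deduces the statement as the $n=2$, $C_f$ case of the more general Theorem~\ref{T:nfree}, stratifying by the vanishing orders $s_i$ of $f$ at $x_i$ and prescribing the values of the cofactors $f/(x-x_i)^{s_i}$ at $x_i$ (Lemmas~\ref{L:count} and~\ref{L:counts}). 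For squarefree $f$ these encode the same information, but the paper's formulation is the one that generalizes to $n$th power-free $f$ and to normalizations, which it needs for Theorem~\ref{T:mfree}. Your one deferred step --- that the number of squarefree $f\in\F_q[x]_d$ in a fixed residue class $r \bmod \prod_a(x-a)^2$ with all $r_a\neq 0$ is asymptotically independent of $r$ --- is asserted (``via a squarefree sieve'') rather than proved; it is true and is established exactly like the paper's Lemma~\ref{L:count}: the condition $r_a\neq 0$ forces any $f_2$ with $f_2^2\mid f$ to be coprime to $\prod_a(x-a)$, so each term of the M\"obius sum counts polynomials in a single residue class modulo a degree-$2q$ modulus, and the resulting Euler product $\zeta(2)^{-1}(1-q^{-2})^{-q}$ does not depend on $r$. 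So this is deferral of standard work, not a flaw, though in a complete write-up it is where all the error-term bookkeeping lives. Second, your treatment of the irreducibility clause is genuinely different and, for this particular theorem, cleaner: by the Capelli criterion, a nonconstant squarefree $f$ is not a $p$th power in $\bar{\F_q}(x)$ for any prime $p\mid m$ nor in $-4\bar{\F_q}(x)^4$, so every smooth $C_f$ with $d\geq 1$ is automatically geometrically irreducible and the restricted counts are literally the same sets. The paper instead invokes Kummer theory (Lemma~\ref{L:irred}) together with a density argument showing that polynomials which are proper powers up to constants are negligible --- an argument it needs anyway for the non-squarefree settings of Theorems~\ref{T:mfree} and~\ref{T:nfree}, but which is superfluous in the squarefree case.
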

In the case $m=2$, Theorem~\ref{T:sqfree} reduces to \cite[Theorem 1]{kurlberg-rudnick} giving the distribution of $\F_q$ points
on hyperelliptic curves.

When $f(x)$ is not square-free, we can consider the normalization $\tilde{C_f}$ of $C_f$, which is a smooth curve.  
Note that $C_f$ and $C_{fg^m}$ have the same normalization, for $g\in\F_q[x]$.  So it is natural to consider the smooth normalizations $\tilde{C_f}$ for $f$ that are $m$th power-free.
\begin{theorem}\label{T:mfree}
Let  $(q,m)=1$.  Then
 $$
\lim_{d\ra\infty} \frac{\#\{f\in \F_q[x]_d | \textrm{$f$ $m$th power-free, } {\tilde{C_f}} \textrm{ with $k$ $\F_q$ points} \}}{\# \{f\in \F_q[x]_d | \textrm{$f$ $m$th power-free}\}}=
\Prob(\sum_{i=1}^q X_i=k),
$$
where the $X_{j}$ are independent and identically distributed random variables with
$$
X_{j}
	=
	\begin{cases}
		0 \text{ with probability }	\displaystyle\sum_{0 \leq s \leq m-1}\left(1-\dfrac{1}{(m, s, q-1)}\right)\frac{1-q^{-1}}{1-q^{-m}}, \\
		N \text{ with probability } \displaystyle\sum_{\substack{0 \leq s \leq m-1 \\ (m, s,q-1) = N}}\dfrac{q^{-s}(1-q^{-1})}{N(1-q^{-m})}.
	\end{cases}
$$
We have the same results if we restrict to $f$ such that  $\tilde{C_f}$ is irreducible or geometrically irreducible.
\end{theorem}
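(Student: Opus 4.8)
The plan is to follow the strategy behind Theorem~\ref{T:sqfree}, reducing the global point count to a sum of independent local contributions over the $q$ rational points of $\mathbb{A}^1$, but now the local contribution must account for the resolution of the singularity of $C_f$ above a point where $f$ vanishes. First I would write $\#\tilde{C_f}(\F_q)=\sum_{a\in\F_q}N_a(f)$, where $N_a(f)$ is the number of $\F_q$-points of $\tilde{C_f}$ lying over $x=a$; since $\tilde{C_f}$ is the normalization of the affine curve, every $\F_q$-point lies over some $a\in\F_q$, and $N_a(f)$ equals the number of degree-one places of $\F_q(x)[y]/(y^m-f)$ above the place $(x-a)$. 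The key local input, which is the content of Section~\ref{S:ptsonnorm}, is a closed form for $N_a(f)$ depending only on $s:=\mathrm{ord}_a f$ and the leading coefficient $u_0:=\big((x-a)^{-s}f\big)(a)\in\F_q^{\times}$. Writing $g=\gcd(m,s)$ and analyzing the extension $\F_q((x-a))[y]/(y^m-(x-a)^s u)$, one finds that every place above $(x-a)$ has ramification index $m/g$, and that the unit $\eta=y^{m/g}/(x-a)^{s/g}$ reduces to a root of $T^{g}-u_0$; hence the places above $(x-a)$ correspond to the irreducible factors of $T^{g}-u_0$ over $\F_q$, and the degree-one places correspond to its roots in $\F_q$. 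Therefore $N_a(f)=(m,s,q-1)$ when $u_0$ is a $g$-th power in $\F_q^{\times}$ and $N_a(f)=0$ otherwise. As a check, $s=0$ recovers the count $(m,q-1)$ or $0$ of Theorem~\ref{T:sqfree}, and $s=1$ gives the single smooth point $N_a=1$.

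Since $f$ is $m$th power-free we have $0\le s\le m-1$, and both $s$ and $u_0$, hence $N_a(f)$, are determined by the residue of $f$ modulo $(x-a)^m$. The heart of the argument is then to show that, as $d\ra\infty$, the residues $\big(f \bmod (x-a)^m\big)_{a\in\F_q}$ of a uniformly random $m$th power-free $f\in\F_q[x]_d$ become jointly independent and uniform over those residues not divisible by $(x-a)^m$. I would establish this exactly as in the square-free case: expand the indicator of the $m$th power-free condition by inclusion--exclusion over $m$th-power divisors $h^m\mid f$, and for each fixed choice of residues $f\equiv f_a \bmod (x-a)^m$ count the polynomials of degree $d$ in the prescribed congruence class by the Chinese Remainder Theorem, so that the main term factors as a product of local densities over the $q$ rational points times the global density of the power-free condition away from them. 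After dividing by the total number of $m$th power-free $f\in\F_q[x]_d$, the factors at the $q$ points survive in the limit while the tail of the sieve contributes $o(1)$.

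Combining the two steps, $\#\tilde{C_f}(\F_q)=\sum_{a\in\F_q}N_a$ becomes a sum of $q$ independent and identically distributed terms, each $N_a$ having the limiting law of $X$. For a single point, the event $s=\mathrm{ord}_a f$ occurs with limiting probability $q^{-s}(1-q^{-1})/(1-q^{-m})$, the $m$th power-free renormalization of the geometric law $q^{-s}(1-q^{-1})$ on the order of vanishing; and given $s$ the leading coefficient $u_0$ is uniform in $\F_q^{\times}$, so it is a $g$-th power with probability $1/(m,s,q-1)$. This reproduces exactly the law of $X$ in the statement, with $N=(m,s,q-1)$, and the distribution of $\sum_{i=1}^q X_i$ is then the claimed limit. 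Finally, the $f$ for which $C_f$ is not (geometrically) irreducible are those for which $y^m-f$ factors over $\overline{\F_q}(x)$, which forces $f$ to be, up to a constant multiple, a perfect $d$-th power for some divisor $d\mid m$ with $d>1$; such $f$ have density $0$ among $m$th power-free polynomials, so restricting to the (geometrically) irreducible locus does not affect the limit.

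I expect the main obstacle to be the uniformity in the sieve of the second paragraph: one must count $m$th power-free polynomials with prescribed residues at the $q$ rational points with an error term uniform in those residues and summable over the inclusion--exclusion, so that the local densities genuinely decouple and the $d\ra\infty$ limit exists and factors as a product. The geometric local computation of $N_a$, while essential, is a finite check once the place structure of the superelliptic singularity is in hand.
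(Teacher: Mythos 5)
Your proposal is correct and follows essentially the same route as the paper: your local computation of $N_a(f)$ is the paper's Lemma~\ref{L:howmany} (done there via the subfield $L=\F_q(x)[z]/(z^{(m,s)}-g(x))$ rather than over the completion), your inclusion--exclusion over $m$th-power divisors combined with Chinese Remainder Theorem congruence counting is an equivalent repackaging of the paper's interpolation Lemmas~\ref{L:count}, \ref{L:counts} and \ref{L:dem}, and your density-zero argument for geometrically reducible $f$ is exactly how the paper (via Lemma~\ref{L:irred}) justifies the irreducibility refinement. One remark: your derivation yields $\Prob(X_j=0)=\sum_{0\le s\le m-1}q^{-s}\bigl(1-\tfrac{1}{(m,s,q-1)}\bigr)\tfrac{1-q^{-1}}{1-q^{-m}}$, which carries a factor $q^{-s}$ that is missing from the theorem as printed (evidently a typo, since without it the probabilities do not sum to $1$); your version is what the paper's own proof produces as well, so the discrepancy is not an error on your part.
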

If we are interested in $m$-fold cyclic covers of the line, then we must consider $q\equiv 1 \pmod{m}$.
When $q\equiv 1 \pmod{m}$, by Kummer theory, the cyclic $m$-fold covers of the line are exactly the irreducible
$\tilde{C_f}$ of Theorem~\ref{T:mfree}.
Both Theorems~\ref{T:sqfree} and \ref{T:mfree} will follow from Theorem~\ref{T:nfree}, which will give the distribution of points on superelliptic curves and their normalizations with $f(x)$ that are $n$th power-free,
further refined by $x$ values.  We prove Theorem~\ref{T:nfree} by first relating the number of points
on the normalizations to the explicit affine model in Section~\ref{S:ptsonnorm}, and then using counting methods
similar to those of \cite{kurlberg-rudnick} in Section~\ref{S:main} (with the difference that our setup allows us to avoid counting
polynomials interpolated to take zero values).

Note that the average number of points in both theorems above is $q$, which agrees
with the average in \cite{bdfl-trigonal,bdfl-pfold,kurlberg-rudnick} for prime degree cyclic covers of the (affine) line.
However, our distributions differ from those in \cite{bdfl-trigonal,bdfl-pfold}, which is not a contradiction as we take
different invariants going to infinity.  In our case, we are letting the degree of $f$, or $C_f$, the affine model of the curve,
go to infinity.  Cyclic $p$-fold curves have a signature $d_1,\dots, d_{p-1}$, and in \cite{bdfl-trigonal,bdfl-pfold}
one has $\min_i(d_i)\ra \infty.$  It would be very interesting to understand why and how this difference has an impact on the resulting distribution of points, not only in this case, but more generally.  For this reason, we particularly highlight the first 
contrast, in the case of cyclic trigonal curves, in which we have Theorem~\ref{T:mfree} when $q\equiv 1 \pmod{3}$ with
$$
X_{j}
	=
	\begin{cases}
		0 \text{ with probability }	\displaystyle \frac{2}{3(1+q^{-1}+q^{-2})}, \\
		1 \text{ with probability }	\displaystyle \frac{q^{-1}+q^{-2}}{1+q^{-1}+q^{-2}}, and \\
			3 \text{ with probability }	\displaystyle \frac{1}{3(1+q^{-1}+q^{-2})}, \\
	\end{cases}
$$
and
when instead of $\deg(f)\ra\infty$, we have $\min_i(d_i)\ra \infty$, \cite{bdfl-trigonal} gives an analogous theorem with
$$
X_{j}
	=
	\begin{cases}
		0 \text{ with probability }	\displaystyle \frac{2}{3(1+2q^{-1})}, \\
		1 \text{ with probability }	\displaystyle \frac{2q^{-1}}{1+2q^{-1}}, and \\
			3 \text{ with probability }	\displaystyle \frac{1}{3(1+2q^{-1})}. \\
	\end{cases}
$$
The difference seems very suggestive but the reason for it is not immediately clear.

\section{Points on the normalization}\label{S:ptsonnorm}
In this section, we determine the number of points on the normalization $\tilde{C_f}$ in terms of $f$.
Note that we have a map $\tilde{C_f}\ra C_f \ra \A^1=\Spec \F_q[x]$, and so a degree $1$ point of $C_f$ maps to a degree $1$ point of $\A^1=\Spec \F_q[x]$, and thus we can talk about the $x$-value of such a point.  

We say $f\in\F_q[x]$ has \emph{power $r$ over a field $F$} if $r$ is the greatest integer such that $f(x)=g(x)^r$ for
some $g\in F[x]$.
The following lemma deals with when our curve $C_f$ is irreducible or geometrically irreducible.
\begin{lemma}\label{L:irred}
If the power of $f(x)$ over $\bar{\F_q}$ is relatively prime to $m$, then $y^m-f(x)$
is irreducible in $\bar{\F_q}[x,y]$ (i.e. $C_f$ is geometrically irreducible).

\end{lemma}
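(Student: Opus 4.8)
The plan is to reduce to the rational function field and then read off the obstruction from the factorization of $y^m-f$ over an algebraic closure. Set $K:=\bar{\F_q}(x)$. Since $y^m-f(x)$ is monic in $y$, it is primitive as an element of $\bar{\F_q}[x][y]$, so by Gauss's lemma it is irreducible in $\bar{\F_q}[x,y]=\bar{\F_q}[x][y]$ if and only if it is irreducible in $K[y]$; thus it suffices to work over $K$. Writing $f=c\prod_i(x-\alpha_i)^{e_i}$ with distinct $\alpha_i\in\bar{\F_q}$, $c\in\bar{\F_q}^\times$, and $f$ nonconstant, the power of $f$ over $\bar{\F_q}$ is exactly $r=\gcd_i e_i$ (the constant $c$ imposes no condition since $\bar{\F_q}$ is algebraically closed). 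Because $(q,m)=1$, the field $\bar{\F_q}\subseteq K$ contains all $m$-th roots of unity $\mu_m$ and $y^m-f$ is separable, so if $\beta$ is one root in $\overline{K}$ then the full set of roots is $\{\zeta\beta:\zeta\in\mu_m\}$.

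Next I would assume, toward a contradiction, a nontrivial factorization $y^m-f=g\,h$ with $g,h\in K[y]$ monic of degrees $j$ and $m-j$, where $0<j<m$. The $y$-constant term $g_0\in K$ of $g$ is, up to sign, the product of the $j$ roots of $g$, each of the form $\zeta\beta$; hence $g_0=\eta\,\beta^{j}$ for some $\eta\in\bar{\F_q}^\times$. Since $g_0\in K$ and $\eta\in\bar{\F_q}\subseteq K$, this forces $\beta^{j}\in K$, and therefore $f^{j}=(\beta^{m})^{j}=(\beta^{j})^{m}$ is an $m$-th power in $K$. As $\bar{\F_q}[x]$ is integrally closed, $f^{j}$ is then the $m$-th power of a polynomial, and comparing the multiplicity of each $\alpha_i$ gives $m\mid j\,e_i$ for all $i$.

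It remains to turn this into a contradiction using $\gcd(r,m)=1$, which is the purely arithmetic heart of the argument. For each prime power $p^a$ exactly dividing $m$, the condition $\gcd(\gcd_i e_i,\,m)=1$ guarantees an index $i$ with $p$ not dividing $e_i$; then $m\mid j\,e_i$ forces $p^{a}\mid j$. Ranging over all such $p$ yields $m\mid j$, contradicting $0<j<m$. Hence no nontrivial factorization exists, $y^m-f$ is irreducible over $K$, and by Gauss's lemma irreducible in $\bar{\F_q}[x,y]$, i.e. $C_f$ is geometrically irreducible.

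I expect the step needing the most care to be the passage from the factorization to the divisibility condition, namely ``$g_0\in K\Rightarrow\beta^{j}\in K\Rightarrow f^{j}$ is an $m$-th power,'' since this is where the global shape of the factorization is distilled into a statement about multiplicities; the remaining number theory is routine. A cleaner conceptual packaging of the same computation is available through Kummer theory: as $K$ contains $\mu_m$, irreducibility of $y^m-f$ over $K$ is equivalent to $f$ having order exactly $m$ in $K^\times/(K^\times)^m$, and the multiplicity analysis above shows this order is $\operatorname{lcm}_i\big(m/\gcd(m,e_i)\big)$, which equals $m$ precisely when $\gcd(r,m)=1$.
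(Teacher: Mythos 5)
Your proof is correct, and it is best described as a self-contained unpacking of the paper's two-line argument rather than the same proof. The paper's proof consists of two assertions: that power prime to $m$ forces $f$ to have order exactly $m$ in $\bar{\F_q}(x)^*/(\bar{\F_q}(x)^*)^m$, and that Kummer theory (over a field containing the $m$-th roots of unity, which $K=\bar{\F_q}(x)$ does since $(q,m)=1$) then gives irreducibility of $y^m-f$. You prove both ingredients from scratch: your step ``$f^j$ an $m$-th power in $K$ $\Rightarrow$ $m\mid je_i$ for all $i$ $\Rightarrow$ $m\mid j$ by the prime-by-prime gcd argument'' is precisely the verification of the order-$m$ claim, which the paper asserts without proof; and your constant-term analysis of a hypothetical monic factor ($g_0=\eta\beta^j$ with $\eta$ a root of unity, forcing $\beta^j\in K$) reproves the Kummer-theoretic irreducibility criterion itself instead of citing it as a black box. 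You also make explicit the Gauss's lemma passage between $K[y]$ and $\bar{\F_q}[x,y]$, and the use of integral closedness of $\bar{\F_q}[x]$ to promote the rational $m$-th root of $f^j$ to a polynomial before comparing multiplicities; both of these are left implicit in the paper. What your route buys is an elementary, citation-free argument that records exactly where $(q,m)=1$ and algebraic closedness enter (separability, roots of unity, absorbing the leading constant); what the paper's route buys is brevity, since the criterion ``$y^m-a$ irreducible over $K\supseteq\mu_m$ iff $a$ has order $m$ in $K^*/(K^*)^m$'' is standard. One further point in your favor: you note that $f$ must be nonconstant, which is indeed implicit in the lemma's hypothesis (for a nonzero constant the power over $\bar{\F_q}$ is not well defined, and the conclusion would fail).
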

\begin{proof}
If the power of $f$ over $\bar{\F_q}$ is relatively prime to $m$, then $f$ has order $m$ in $\bar{\F_q}(x)^*/(\bar{\F_q}(x)^*)^m$.
Then, by Kummer theory, we have that $y^m-f(x)$ is irreducible in $\bar{\F_q}[x,y]$.

\end{proof}

\begin{lemma}\label{L:howmany}
Let  $f\in \F_q[x]$ have power over $\bar{\F_q}$ relatively prime to $m$ and write
$f(x)=(x-x_i)^{s_i}g_i(x)$ with $s_i\geq 0$, and $g_i\in\F_q[x]$, and $g_i(x_i)\ne 0$.
Then, the number of 
degree $1$ points of $\tilde{C_f}$ with $x=x_0$ for some $x_0\in\F_q$ is equal to
$$
\begin{cases}
 0 &\textrm{if $g_i(x_i)$ is not an $(m,s_i)$ power in $\F_q$} \\
 (m,s_i, q-1) &\textrm{if $g_i(x_i)$ is an $(m,s_i)$ power in $\F_q$.} 
\end{cases}
$$
\end{lemma}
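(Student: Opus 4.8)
The plan is to compute the number of degree-$1$ places of the function field of $\tilde{C_f}$ lying over $x_0$ by a purely local analysis at $x_0$. Fix $x_0\in\F_q$ and write $f(x)=(x-x_0)^{s}g(x)$ with $g(x_0)=a\ne 0$, so that $s$ and $a$ are the quantities $s_i$ and $g_i(x_i)$ of the statement for the index with $x_i=x_0$; set $d=(m,s)$ and write $m=dm'$, $s=ds'$, so that $(m',s')=1$. By Lemma~\ref{L:irred} the polynomial $y^m-f$ is geometrically irreducible, so $K:=\F_q(x)[y]/(y^m-f)$ is a field with field of constants $\F_q$, and $\tilde{C_f}$ is its smooth model. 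A degree-$1$ point of $\tilde{C_f}$ mapping to $x_0$ is exactly a place of $K$ above the place $x=x_0$ of $\F_q(x)$ whose residue field is $\F_q$, so I would count such places.

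To do this I would pass to the completion $L:=\F_q((\pi))$ at $\pi:=x-x_0$. The places of $K$ above $x_0$ correspond to the irreducible factors of $y^m-\pi^{s}g$ over $L$, equivalently to the factor fields of the $L$-algebra $A:=L[y]/(y^m-\pi^{s}g)$, with the residue field of a place equal to that of the corresponding local field. Since $g(x_0)=a\ne 0$, $g$ is a unit in $\F_q[[\pi]]$. The key manipulation is to introduce $z:=y^{m'}\pi^{-s'}\in A$; a direct computation gives $z^{d}=y^{m}\pi^{-s}=g$ and $y^{m'}=\pi^{s'}z$, whence
$$
A \;\cong\; B[y]/\bigl(y^{m'}-\pi^{s'}z\bigr), \qquad B := L[z]/(z^{d}-g),
$$
an isomorphism of $L$-algebras of dimension $m$. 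Because $(d,q)=1$ and $g$ is a unit, Hensel's lemma identifies the factorization of $z^{d}-g$ over $L$ with that of $z^{d}-a$ over $\F_q$: writing $z^{d}-a=\prod_j P_j$ with $P_j$ irreducible over $\F_q$ of degree $\delta_j$, one gets $B\cong\prod_j L_j$ with each $L_j/L$ unramified of residue degree $\delta_j$.

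For the second factor, over each $L_j$ the element $\pi^{s'}z_j$ (where $z_j$ is the image of $z$ in $L_j$, a unit) has valuation $s'$ with $(s',m')=1$, so the degree-$m'$ polynomial $y^{m'}-\pi^{s'}z_j$ is irreducible and defines a tamely, totally ramified extension of $L_j$; comparing the ramification index $e=m'$ with the degree forces residue degree $1$, so the residue field is unchanged. Hence $A\cong\prod_j L_j(y)$, and the place coming from the $j$-th factor has residue degree $\delta_j$. Consequently the number of degree-$1$ places above $x_0$ equals the number of linear factors $P_j$, i.e. the number of roots of $z^{d}=a$ in $\F_q$. This is a standard count in the cyclic group $\F_q^{\times}$: it is $0$ unless $a$ is a $d$-th power, in which case it equals $(d,q-1)=(m,s,q-1)$. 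Since being a $d$-th power in $\F_q$ is the same as being an $(m,s)$-power, this gives the stated dichotomy, and the case $s=0$ (where $d=m$, $m'=1$, and there is no ramified factor) is included automatically.

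The main obstacle is the middle step: producing the clean product decomposition of $A$ and verifying that the ramified factor $y^{m'}-\pi^{s'}z_j$ contributes no residue extension. This is exactly where the coprimality $(m',s')=1$ and the tameness hypothesis $(m,q)=1$ are used, and it is what guarantees that the degree-$1$ points are detected purely by the unramified finite-field equation $z^{d}=a$.
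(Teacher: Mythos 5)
Your proof is correct, and its core is the same as the paper's: both arguments hinge on the element $z=y^{m/(m,s)}(x-x_0)^{-s/(m,s)}$ satisfying $z^{(m,s)}=g$, and on splitting the extension at $z$ into an unramified layer of degree $(m,s)$ (governed by the factorization of $z^{(m,s)}-a$ over $\F_q$) and a totally ramified layer of degree $m/(m,s)$ that creates no new residue extension. The difference is in execution. The paper stays global: it forms the intermediate function field $\F_q(x)[z]/(z^{(m,s)}-g(x))$ --- which requires a second appeal to Lemma~\ref{L:irred} to know that polynomial is irreducible --- computes the splitting of $(x)$ there by reducing modulo $(x)$ (using maximality of the order $\F_q[x,z]/(z^{(m,s)}-g)$ at $(x)$), proves total ramification of the top layer by the B\'ezout trick (writing $(m,s)=mi+sj$, the element $(y^jx^i)^m$ has $x$-valuation $(m,s)$, forcing $e\geq m/(m,s)$ at every place above $(x)$), and disposes of the case where $g(x_0)$ is not an $(m,s)$ power by a separate congruence argument ($z^{(m,s)}\equiv g(0)$ modulo any prime above $(x)$ forces inertia degree $>1$). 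You instead pass to the completion and decompose the \'etale algebra $A=K\otimes_{\F_q(x)}\F_q((\pi))$ explicitly as $\prod_j L_j[y]/(y^{m'}-\pi^{s'}z_j)$, using Hensel's lemma for the unramified layer and the standard fact that $y^{m'}-u$ with $v(u)=s'$, $(s',m')=1$, is irreducible and totally ramified for the top layer. This buys two things: the two cases (power or not) are handled uniformly by counting linear factors of $z^{d}-a$, and you never need the intermediate extension to be a field --- your $B$ may be a product of fields, so the second invocation of Lemma~\ref{L:irred} becomes unnecessary. What the paper's version buys in exchange is that it avoids completions and tensor-product decompositions altogether, staying inside the curves/function-fields dictionary it has already set up.
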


\begin{proof}
 The number of 
degree $1$ points of $\tilde{C_f}$ with $x=x_0$ for some $x_0\in\F_q$ is equal to the number of degree $1$ primes
over $(x-x_0)$ in $K=\F_q(x)[y]/(y^m-f(x))$ 
by the usual correspondence between curves and their function fields.
(We have that $K$ is a field by Lemma~\ref{L:irred}.)
Without loss of generality, we can consider the case $x_i=0$.  We write $f(x)=x^s g(x)$, where
$g(x)\in\F_q[x]$ and $g(0)\ne 0$.  

We have $z=y^{m/(m,s)}x^{-s/(m,s)}\in K$ and $z^{(m,s)}=g(x)$, so $z$ is integral over $\F_q[x]$.  
Thus at any prime $\wp$ over $(x)$, modulo $\wp$ we have $z^{(m,s)}\equiv g(0)$.  Thus if
$g(0)$ is not an $(m,s)$ power in $\F_q$, then the inertia degree at $\wp$ is $>1$, and there 
are no degree $1$ primes of $K$ over $(x)$.

Note that $K$ contains $L=\F_q(x)[z]/(z^{(m,s)}-g(x))$ (since the power of $f$ over $\bar{\F_q}$ is relatively prime to $m$, the power of $g$ over $\bar{\F_q}$ is relatively prime to $(m,s)$ and thus this polynomial is irreducible by Lemma~\ref{L:irred}).
We have that $L$ is unramified over $(x)$.
Since $\F_q[x,z]/(z^{(m,s)}-g(x))$ is maximal at $(x)$, we can compute the splitting of $(x)$ in 
$L$ by computing the splitting of $z^{(m,s)}-g(x)$ modulo $(x)$.  S
If $g(0)$ is an $(m,s)$ power in $\F_q$, then $z^{(m,s)}-g(x)$ has exactly $(m,s,q-1)$ distinct degree $1$ factors
modulo $(x)$, and there are exactly $(m,s,q-1)$ degree $1$ primes in $L$ over $(x)$. 

We write $(m,s)=mi+sj$ and then $(y^jx^i)^m=x^{mi+sj}g(x)^j$ has valuation $(m,s)$
in $\F_q(x)$ (with respect to $x$), so at any place of $K$ above $(x)$ there is an element of valuation $\frac{(m,s)}{m}$, and thus any prime over $(x)$ has ramification degree $e\geq \frac{m}{(m,s)}$. 
This means every prime of $L$ over $(x)$ must be completely ramified from $L$ to $K$, and thus
there are exactly $(m,s,q-1)$ degree $1$ primes in $K$ over $(x)$.
The lemma follows.
\end{proof}

\section{Main Theorem}\label{S:main}

Throughout this section, we fix an integer $n\geq 2$.
Let $$\cF:=\{f\in \F_q[x] |  \textrm{$f$ is $n$th power-free}\}$$
and $\cF_d$ be the elements of $\cF$ of degree $d$. We write $x_1,\dots,x_q$ for the elements of $\F_q$.

\begin{theorem}\label{T:nfree}
Let $k_1,\dots,k_q$ be integers and $(q,m)=1$.  Then
$$
\lim_{d\ra\infty} \frac{\#\{f\in \cF_d | {C_f} \textrm{ has $k_i$ points with $x=x_i$ for all $i$} \}}{\# {\cF}_d}=
\Prob(X_i=k_i \textrm{ for all $i$}),
$$
where the $X_{j}$ are independent and identically distributed random variables with
$$
X_{j}
	=
	\begin{cases}
		0 \text{ with probability }	\left(1-\frac{1}{(m,q-1)} \right)\frac{1-q^{-1}}{1-q^{-n}}, \\
		1 \text{ with probability }	\frac{q^{-1}-q^{-n}}{1-q^{-n}}, \\
		(m,q-1) \text{ with probability } \frac{1}{(m,q-1)} \frac{1-q^{-1}}{1-q^{-n}}.
	\end{cases},
$$
where when $(m,q-1)=1$ the last two probabilities are added to give $\Prob(X_j=1)$.
Also,
$$
\lim_{d\ra\infty} \frac{\#\{f\in \cF_d | \tilde{C_f} \textrm{ has $k_i$ points with $x=x_i$ for all $i$} \}}{\# {\cF}_d}=
\Prob(X_i=k_i \textrm{ for all $i$}),
$$
where the $X_{j}$ are independent and identically distributed random variables with
$$
X_{j}
	=
	\begin{cases}
		0 \text{ with probability }	\displaystyle\sum_{0 \leq s \leq n-1}\left(1-\dfrac{1}{(m, s, q-1)}\right)\frac{1-q^{-1}}{1-q^{-n}}, \\
		N \text{ with probability } \displaystyle\sum_{\substack{0 \leq s \leq n-1 \\ (m, s,q-1) = N}}\dfrac{q^{-s}(1-q^{-1})}{N(1-q^{-n})}.
	\end{cases}
$$
We have the same results if we restrict to $f$ such that $C_f$ (or $\tilde{C_f}$) is irreducible or geometrically irreducible.
\end{theorem}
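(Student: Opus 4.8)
The plan is to reduce the point count above each $x_i$ to a local invariant of $f$ at $x_i$, and then to show that these invariants become independent and equidistributed as $d\ra\infty$. For the affine model $C_f$, the degree-one points with $x=x_i$ are the $y\in\F_q$ with $y^m=f(x_i)$: if $f(x_i)=0$ the only one is $y=0$, while if $f(x_i)\neq 0$ there are $(m,q-1)$ of them when $f(x_i)$ is an $m$th power in $\F_q^*$ and none otherwise, since the $m$th powers form a subgroup of index $(m,q-1)$. For the normalization $\tilde{C_f}$, Lemma~\ref{L:howmany} gives the count at $x_i$ in terms of $s_i=v_{x_i}(f)$ and the unit part $g_i(x_i)$, namely $(m,s_i,q-1)$ if $g_i(x_i)$ is an $(m,s_i)$th power and $0$ otherwise. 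In either case the count is determined by the reduction of $f$ modulo $(x-x_i)^n$, because $f$ being $n$th power-free forces $s_i\leq n-1$. Lemma~\ref{L:howmany} requires the power of $f$ over $\bar{\F_q}$ to be prime to $m$, which I will argue holds off a density-zero set, so I may assume it throughout.

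Set $M=\prod_{i=1}^q(x-x_i)^n$. The heart of the proof is the claim that, as $d\ra\infty$, a uniformly random $f\in\cF_d$ becomes equidistributed among the $(q^n-1)^q$ residues $a\bmod M$ having $v_{x_i}(a)\leq n-1$ for every $i$, which I call \emph{admissible}. I would establish this with the M\"obius sieve for the $n$th power-free condition, writing its indicator as $\sum_{e^n\mid f}\mu(e)$ and counting monic $f$ of degree $d$ with $f\equiv a\bmod M$ and $e^n\mid f$. For admissible $a$ the terms with $(e,M)\neq 1$ vanish, as they would force some $v_{x_i}(f)\geq n$, and for $(e,M)=1$ the Chinese Remainder Theorem yields exactly $q^{\,d-n\deg e-\deg M}$ such polynomials once $n\deg e+\deg M\leq d$. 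The main term is then $q^{\,d-\deg M}\sum_{(e,M)=1}\mu(e)q^{-n\deg e}=q^{\,d-\deg M}\prod_{(P,M)=1}(1-q^{-n\deg P})$, independent of the admissible class $a$; dividing by $\#\{\text{monic }f\in\cF_d\}\sim q^d\prod_P(1-q^{-n\deg P})$ cancels the Euler product and sends the ratio to $q^{-\deg M}\prod_{P\mid M}(1-q^{-n\deg P})^{-1}=q^{-nq}(1-q^{-n})^{-q}=(q^n-1)^{-q}$, the reciprocal of the number of admissible classes. The count over all of $\F_q[x]_d$ is identical, since a unit multiple of an admissible residue is admissible.

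Since $M$ factors into the pairwise coprime $(x-x_i)^n$, equidistribution modulo $M$ gives that the local data at $x_1,\dots,x_q$ are asymptotically independent, and identically distributed because the local picture at each $x_i$ is translation invariant. The $(q-1)q^{\,n-1-s}$ admissible classes modulo $(x-x_i)^n$ with $s_i=s$ (of which $q^{\,n-1-s}$ have a prescribed $g_i(x_i)$) then give $\Prob(s_i=s)=q^{-s}(1-q^{-1})/(1-q^{-n})$ with $g_i(x_i)$ conditionally uniform in $\F_q^*$. Feeding this into the count formulas of the first paragraph, and using that a uniform unit is an $(m,s)$th power with probability $1/(m,s,q-1)$, gives for each $s$ the value $(m,s,q-1)$ with probability $\Prob(s_i=s)/(m,s,q-1)$ and the value $0$ with the complement; grouping the $s$ sharing a common $N=(m,s,q-1)$ (and, for $C_f$, separating $s=0$ from $s\geq1$) reproduces the stated laws for $X_j$, the last two probabilities merging into $\Prob(X_j=1)$ when $(m,q-1)=1$.

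For the irreducible and geometrically irreducible versions, the contrapositive of Lemma~\ref{L:irred} shows that any $f$ that is not geometrically irreducible has power over $\bar{\F_q}$ divisible by some prime $\ell\mid m$, hence is a perfect $\ell$th power over $\bar{\F_q}$ and so a unit times an $\ell$th power in $\F_q[x]$; there are only $O(q^{\,d/\ell})=o(\#\cF_d)$ such $f$. So these families differ from $\cF_d$ by a density-zero set and share the same limit, which also justifies assuming the power of $f$ is prime to $m$. The hard part will be the uniform control of the sieve in the second paragraph: bounding the tail of $e$ with $n\deg e+\deg M>d$ together with the degree-$d$ boundary terms, so that the limit is exactly the Euler product uniformly over the fixed finite set of admissible classes modulo $M$. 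Once this equidistribution is in hand, everything else is bookkeeping.
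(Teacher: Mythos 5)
Your proposal is correct, and it reaches the theorem by a route that is recognizably parallel to the paper's but organized around a different key counting lemma. What you share with the paper: the M\"obius sieve over $e^n \mid f$ for the $n$th-power-free condition, an Euler-product main term with geometric-series tail bounds, Lemma~\ref{L:irred} together with the $O(q^{d/\ell+1})$ count of perfect powers to discard geometrically reducible $f$ as a density-zero set, and Lemma~\ref{L:howmany} (resp.\ direct counting of solutions of $y^m=f(x_i)$) for the local point counts; the final bookkeeping is identical. Where you genuinely differ is the counting engine: the paper stratifies $\cF_d$ by the valuation vector $(s_1,\dots,s_q)$, divides out $\prod_i(x-x_i)^{s_i}$ (Lemma~\ref{L:counts}), and reduces to counting $n$th-power-free polynomials interpolating prescribed nonzero values at distinct points (Lemma~\ref{L:count}), proved via the Vandermonde determinant; you instead prove a single, slightly stronger statement, namely equidistribution of $f\in\cF_d$ over the $(q^n-1)^q$ admissible residues modulo the fixed modulus $M=\prod_i(x-x_i)^n$. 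This buys you two things: the per-term count $q^{d-n\deg e-\deg M}$ is immediate, since a residue class modulo $M$ contains exactly $q^{D-\deg M}$ monic polynomials of each degree $D\geq \deg M$, with no interpolation or linear algebra needed; and the independence and identical distribution of the local data at $x_1,\dots,x_q$ fall out of the Chinese Remainder Theorem at the level of the equidistribution statement, rather than being read off from the factorization of the main term at the end. The tail estimates you flag as ``the hard part'' are the same routine geometric-series bounds as in Lemma~\ref{L:count}, and uniformity is automatic since $M$ is fixed and there are finitely many admissible classes, so this is not a gap. One further point in your favor: your zero-probability for $\tilde{C_f}$, obtained as the complement, carries the weight $q^{-s}$ in each summand, i.e.\ it equals $\sum_{0\leq s\leq n-1}\left(1-\frac{1}{(m,s,q-1)}\right)\frac{q^{-s}(1-q^{-1})}{1-q^{-n}}$; this is what the paper's own proof produces (and what makes the probabilities sum to $1$), the displayed formula in the theorem statement having dropped the factor $q^{-s}$.
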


\subsection{Notation}
For a prime $h\in\F_q[x]$, we  write $h^s || f$ if $h^s|f$ and $h^{s+1}\nmid f$.
For $s=(s_1,\dots,s_q)\in \N^{q}$, let 
$$\cF^{\s}:=\{f\in \cF | \textrm{ $(x-x_i)^{s_i}|| f$ for all $i$}\},$$
and $\cF^{\s}_d$ be the elements of $\cF^{\s}$ of degree $d$.
We use $V$ to denote the set of monic polynomials in $\F_q[x]$, and $V_d\sub V$ to denote those of degree $d$.

We define the zeta function
$$
\zeta(s):=\prod_{\substack{P \in \mathbb{F}_{q}[x]: \text{ P is monic irreducible}}}(1 - q^{-s\deg(P)})=\frac{1}{
1-q^{1-s}}
$$
and the M\"{o}bius function for $f\in \F_q[x]$
$$
\mu(f)  
	:=
	\begin{cases}
	0 \ \ \ \ \ \ \text{ if $f$ not not square-free}, \\
	(-1)^{k} \text{ if $f$ is the product of $k$ distinct irreducible factors}.
	\end{cases}
$$

\subsection{Lemmas}
The following is an analog of \cite[Lemma 5]{kurlberg-rudnick} (which is the special case $n=2$).
We count the number of $n$th power-free polynomials interpolating given values.
\begin{lemma}\label{L:count}
  Let $x_{1}, \cdots, x_{l} \in \mathbb{F}_{q}$ be distinct elements, and fix any  $a_{1}, \cdots, a_{l} \in \mathbb{F}_{q}^{*}$. Then
$$\#\{f \in \mathcal{F}_{d} \ | \ f(x_{1}) = a_{1}, \cdots, f(x_{l}) = a_{l}\} = \dfrac{q^{d-l}(q-1)}{\zeta(n)(1-q^{-n})^{l}} + O(q^{d/n+1}).$$
\end{lemma}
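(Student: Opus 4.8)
The plan is to detect the $n$th-power-free condition with the Möbius function and reduce everything to a pure interpolation count. Since being $n$th power-free depends only on the monic part of $f$, the identity $\sum_{b \text{ monic},\, b^n \mid f} \mu(b)$ equals $1$ when $f$ is $n$th power-free and $0$ otherwise, for every $f\in\F_q[x]$. So I would write
$$
\#\{f \in \cF_d \mid f(x_i)=a_i \ \forall i\} = \sum_{\substack{b \text{ monic} \\ n\deg b \le d}} \mu(b)\,\#\{f \in \F_q[x]_d \mid b^n\mid f,\ f(x_i)=a_i \ \forall i\},
$$
and then parametrize the inner set by $f = b^n h$, so that $h$ ranges over all polynomials of degree exactly $D := d - n\deg b$ while the conditions become $h(x_i) = a_i/b(x_i)^n$.

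The key simplification --- and the point where the hypothesis $a_i \in \F_q^*$ enters --- is that if $b(x_i)=0$ for some $i$ then $f(x_i)=0 \neq a_i$, so such $b$ contribute nothing; hence the sum is supported on $b$ coprime to $\prod_i (x-x_i)$, for which the target values $a_i/b(x_i)^n$ are again nonzero. I would then record the interpolation count: polynomials of degree $\le D$ taking $l$ prescribed values at $l$ distinct points form an affine space of size $q^{D+1-l}$ once $D \ge l-1$, so the number of degree-exactly-$D$ solutions is $q^{D+1-l}-q^{D-l} = q^{D-l}(q-1)$ when $D \ge l$, while for $D < l$ there is at most one solution, as a polynomial of degree $\le D$ is determined by fewer than $l+1$ of the values. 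Substituting the main count $q^{D-l}(q-1) = (q-1)q^{d-l}q^{-n\deg b}$ over the range $\deg b \le (d-l)/n$ gives a main term
$$
(q-1)q^{d-l}\sum_{\substack{b \text{ monic} \\ (x-x_i)\nmid b \ \forall i}} \mu(b)\,q^{-n\deg b},
$$
where extending this truncated sum to the full sum over all such $b$ is one of the two sources of error.

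This infinite sum is an Euler product: $\sum_{b \text{ monic}} \mu(b) q^{-n\deg b} = \prod_P (1 - q^{-n\deg P}) = \zeta(n)^{-1}$, and deleting the $l$ degree-one factors at the primes $x-x_i$ multiplies it by $(1-q^{-n})^{-l}$, producing exactly the claimed main term $\frac{q^{d-l}(q-1)}{\zeta(n)(1-q^{-n})^l}$. Finally I would bound the two error contributions. For the tail $\deg b > (d-l)/n$ I use $|\mu(b)|\le 1$ together with the bound $q^j$ on the number of monic $b$ of degree $j$; since $n\ge 2$ the series $\sum_j q^{(1-n)j}$ converges, and a geometric estimate turns $(q-1)q^{d-l}\sum_{\deg b > (d-l)/n} q^{-n\deg b}$ into $O(q^{d/n+1})$ after the exponents collapse. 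For the boundary range $(d-l)/n < \deg b \le d/n$, where $D < l$, the at-most-one-solution bound gives a contribution $\le \sum_{\deg b \le d/n} 1 = O(q^{d/n})$.

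The main obstacle is the bookkeeping in these error estimates: pinning down precisely where the clean formula $q^{D-l}(q-1)$ is valid versus the boundary regime, and verifying that the exponents in the geometric tail indeed collapse to $d/n$. No genuinely hard input is required beyond $n \ge 2$ for convergence and the nonzero-values observation, which is exactly what lets us sidestep the more delicate counting of interpolations at zero values present in \cite{kurlberg-rudnick}.
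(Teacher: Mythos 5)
Your proposal is correct and follows essentially the same route as the paper's proof: Möbius inclusion--exclusion over $b$ with $b^n \mid f$, the observation that $a_i \neq 0$ kills terms with $b(x_i)=0$, the Vandermonde-type interpolation count $(q-1)q^{D-l}$ for $D \ge l$ (at most one solution for $D < l$), the split into the range $\deg b \le (d-l)/n$ plus a boundary range bounded by $O(q^{d/n})$, extension of the truncated sum to the full Euler product $\zeta(n)^{-1}(1-q^{-n})^{-l}$ with a geometric tail bound of $O(q^{d/n+1})$. The only cosmetic difference is that you derive the interpolation count by differencing affine spaces of degree $\le D$ and $\le D-1$, while the paper counts degree-$D$ polynomials directly via linear independence of the evaluation conditions.
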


\begin{proof}
By inclusion-exclusion, we have

\begin{align*}
 &\#\{f \in \mathcal{F}_{d} \ | \ f(x_{1}) = a_{1}, \cdots, f(x_{l}) = a_{l}\}\\
=&\displaystyle\sum_{f_{2} \in V:0 \leq \deg(f_{2}) \leq d/n} \mu(f_{2})\#\{f_{1} \in \F_q[x]_{d - n\deg(f_{2})} : f_{2}(x_{j})^{n}f_{1}(x_{j}) = a_{j} \text{ for } 1 \leq j \leq l\}\\
=&\displaystyle\sum_{f_{2} \in V:0 \leq \deg(f_{2}) \leq d/n} \mu(f_{2})\#\{f_{1} \in \F_q[x]_{d - n\deg(f_{2})} : f_{1}(x_{j}) = a_{j}/f_{2}(x_{j})^{n} \text{ for } 1 \leq j \leq l\}.
\end{align*}

If $D\geq l$, there are $(q-1)q^{D-\ell}$ degree $D$ elements of $\F_q[x]$
that interpolate $l$ given values because each interpolation point imposes a linearly independent condition on the 
coefficients of a monic polynomial of degree $D$ (by the Vandermonde determinant).  If $l\geq D+1$, then $D+1$ of these conditions are still linearly independent on polynomials of degree $D$, and
thus there is at most $1$ polynomial of degree $D$ interpolating $l$ given values.

We split the above expression into two sums depending on the degree of $f_2$, and thus the above is  
\begin{align*}
= & \displaystyle\sum_{\substack{f_{2} \in V:0 \leq \deg(f_{2}) \leq (d-l)/n \\ f_{2}(x_{j}) \neq 0 \ \text{ for } 1 \leq j \leq l}} \mu(f_{2}) (q-1) q^{d-n\deg(f_{2})-l} + O\left(\displaystyle\sum_{\substack{f_{2}\in V:(d-l)/n < \deg(f_{2}) \leq d/n \\ f_{2}(x_{j}) \neq 0 \ \text{ for } 1 \leq j \leq l}} 1\right).
\end{align*}

We bound the error term by counting all polynomials of degree at most $d/n$ to see that the above is
\begin{align*}
= & \displaystyle\sum_{\substack{f_{2} \in V:0 \leq \deg(f_{2}) \leq (d-l)/n \\ f_{2}(x_{j}) \neq 0 \ \text{ for } 1 \leq j \leq l}} \mu(f_{2}) (q-1) q^{d-n\deg(f_{2})-l} + O(q^{d/n})\\
=&q^{d-l}\displaystyle\sum_{\substack{f_{2} \in V \\ f_{2}(x_{j}) \neq 0 \ \text{ for } 1 \leq j \leq l}} \mu(f_{2}) (q-1) q^{-n\deg(f_{2})} - q^{d-l}\displaystyle\sum_{\substack{f_{2} \in V:(d-l)/n < \deg(f_{2})  \\ f_{2}(x_{j}) \neq 0 \ \text{ for } 1 \leq j \leq l}} \mu(f_{2}) (q-1)q^{-n\deg(f_{2})} + O(q^{d/n}).
\end{align*}

We bound the middle term by counting that there are at most $q^i$ monic polynomials of degree $i$, and summing the geometric series to obtain that the above is
\begin{align*}
=&q^{d-l}\displaystyle\sum_{\substack{f_{2} \in V \\ f_{2}(x_{j}) \neq 0 \ \text{ for } 1 \leq j \leq l}} \mu(f_{2}) (q-1) q^{-n\deg(f_{2})}  + O(q^{d/n+1}).
\end{align*}

We have 
\begin{align*}
 \displaystyle\sum_{\substack{f_{2} \in V \\ f_{2}(x_{j}) \neq 0 \ \text{ for } 1 \leq j \leq l}} \mu(f_{2}) (q-1) q^{-n\deg(f_{2})}=&(q-1) \prod_{\substack{P \in \mathbb{F}_{q}[x]: \text{ P is monic irreducible} \\ P(x_{j}) \neq 0 \ \text{ for } 1 \leq j \leq l}}(1 - q^{-n\deg(P)})\\
=&(q-1) \zeta(n)^{-1} (1-q^{-n})^{-l}.
\end{align*}

Thus
\begin{align*}
 &\#\{f \in \mathcal{F}_{d} \ | \ f(x_{1}) = a_{1}, \cdots, f(x_{l}) = a_{l}\}
=\frac{q^{d-l}
(q-1)}{ \zeta(n) (1-q^{-n})^{l}}
 + O(q^{d/n+1}).
\end{align*}

\end{proof}

Now we refine our interpolation count to $\cF^{\s}_d.$

\begin{lemma}\label{L:counts} Let $a_1,\dots,a_q\in\F_q^*$.  Then
\begin{align*}
 &\# \{f\in \cF^{\s}_d | \textrm{ $\frac{f}{(x-x_i)^{s_i}}|_{x=x_i}=a_i$ for all $i$}\}=
\frac{q^{d-\sum{s_i}-q}(q-1)}{\zeta(n)(1-q^{-n})^{q}} + O(q^{(d-\sum{s_i})/n+1}).
\end{align*}
\end{lemma}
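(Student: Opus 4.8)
The plan is to reduce this refined count directly to Lemma~\ref{L:count} by factoring out the prescribed local behavior at the points $x_1,\dots,x_q$. Throughout I assume $0 \le s_i \le n-1$ for every $i$, since otherwise $(x-x_i)^{s_i}\| f$ would force $(x-x_i)^n \mid f$, making $\cF^{\s}_d$ empty and both sides of the claimed identity zero. I would set $P(x) := \prod_{i=1}^{q}(x-x_i)^{s_i}$, a fixed monic polynomial of degree $\sum_i s_i$, and study the map $f \mapsto h := f/P$. I claim this is a bijection from
$$\{f \in \cF^{\s}_d : \tfrac{f}{(x-x_i)^{s_i}}\big|_{x=x_i} = a_i \text{ for all } i\}$$
onto
$$\{h \in \cF_{d-\sum_i s_i} : h(x_i) = b_i \text{ for all } i\},$$
where $b_i := a_i \big/ \prod_{j\ne i}(x_i-x_j)^{s_j} \in \F_q^*$.

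The verification has two parts. For the algebraic conditions: since the $x_i$ are distinct, writing $f = Ph$ we have $(x-x_i)^{s_i}\| f$ if and only if $(x-x_i)\nmid h$, i.e.\ $h(x_i)\ne 0$; and because each $s_i \le n-1$ and the factors $(x-x_i)$ are pairwise coprime and (when $h(x_i)\ne 0$) coprime to $h$, the polynomial $f$ is $n$th power-free if and only if $h$ is $n$th power-free. For the interpolation conditions: $\frac{f}{(x-x_i)^{s_i}} = \big(\prod_{j\ne i}(x-x_j)^{s_j}\big)h$, so evaluating at $x_i$ gives $\big(\prod_{j\ne i}(x_i-x_j)^{s_j}\big)h(x_i) = a_i$, which (the leading constant being an element of $\F_q^*$) is equivalent to $h(x_i) = b_i$. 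In particular $h(x_i) = b_i \ne 0$ already encodes $(x-x_i)\nmid h$, so the two set descriptions match exactly. As $P$ is monic, $f \mapsto f/P$ is a degree-shifting bijection, so $\deg f = d$ corresponds to $\deg h = d - \sum_i s_i$.

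Finally I would apply Lemma~\ref{L:count} with $l = q$, the distinct points $x_1,\dots,x_q$ (all of $\F_q$), target values $b_1,\dots,b_q \in \F_q^*$, and degree $D := d-\sum_i s_i$, which yields
$$\#\{h \in \cF_{D} : h(x_i) = b_i \text{ for all } i\} = \frac{q^{D-q}(q-1)}{\zeta(n)(1-q^{-n})^{q}} + O(q^{D/n+1});$$
substituting $D = d-\sum_i s_i$ gives exactly the stated formula. The only step requiring real care is the bijection, specifically the claim that dividing by $P$ preserves the $n$th power-free property in both directions. This is where the hypothesis $s_i \le n-1$ is essential and where one must use that the $(x-x_i)$ are pairwise coprime and coprime to $h$; everything else is bookkeeping and a direct appeal to the previous lemma.
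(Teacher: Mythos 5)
Your proposal is correct and is essentially identical to the paper's proof, which simply observes (in one line) that the map $f\mapsto f/\prod_i(x-x_i)^{s_i}$ turns the count into $\#\{h\in\cF_{d-\sum s_i} \mid h(x_i)=b_i \text{ for all } i\}$ for suitable $b_i\in\F_q^*$ and then invokes Lemma~\ref{L:count}; you have just made the bijection and the choice $b_i=a_i/\prod_{j\ne i}(x_i-x_j)^{s_j}$ explicit. One tiny quibble: in the degenerate case $s_i\ge n$ the left side is zero but the right side is \emph{not} (its main term dominates the error for large $d$), so the correct reading is that the lemma implicitly assumes $0\le s_i\le n-1$, as in its application in the paper, rather than that both sides vanish.
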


\begin{proof}
 For some $b_i\in \F_q^*$, we have
\begin{align*}
 \# \{f\in \cF^{\s}_d | \textrm{ $\frac{f}{(x-x_i)^{s_i}}|_{x=x_i}=a_i$ for all $i$}\}&=
\# \{f\in \cF_{d-\sum{s_i}} | \textrm{ $f(x_i)=b_i$ for all $i$}\}
\end{align*}
and then we apply Lemma~\ref{L:count}.
\end{proof}

\begin{lemma}\label{L:dem}
We have that
$\# \cF_d=(q-1)(q^d-q^{d-n+1})=q^d(q-1)/\zeta(n)$ for $d\geq n.$
\end{lemma}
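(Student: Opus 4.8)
The plan is to reduce to counting monic $n$th power-free polynomials and then to evaluate a M\"obius sum that, thanks to an exact cancellation, terminates cleanly precisely when $d \geq n$. First I would observe that a degree $d$ polynomial $f$ is $n$th power-free if and only if its monic normalization is, since scaling by a unit in $\F_q^*$ does not affect power-freeness. Hence $\# \cF_d = (q-1)P_d$, where $P_d$ denotes the number of monic $n$th power-free polynomials in $V_d$, and it suffices to show $P_d = q^d - q^{d-n+1}$ for $d \geq n$.

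Next I would write the indicator that a monic $f$ is $n$th power-free as $\sum_{e^n \mid f}\mu(e)$, the sum over monic $e$; this is the $n$th-power analog of the usual square-free sieve and is checked on prime powers exactly as for $\mu$. Summing over monic $f$ of degree $d$ and interchanging the order of summation, each $e$ with $n\deg(e) \leq d$ contributes $\mu(e)$ times the number $q^{d-n\deg(e)}$ of monic $f$ divisible by $e^n$ (write $f = e^n g$), giving
\begin{equation*}
P_d = q^d \sum_{\substack{e \in V \\ \deg(e) \leq d/n}} \mu(e)\, q^{-n\deg(e)}.
\end{equation*}

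The crux is that this truncated sum equals $\zeta(n)^{-1} = 1 - q^{1-n}$ \emph{exactly}, not merely approximately, once $d \geq n$. This follows from the identity $\sum_{e \in V}\mu(e)u^{\deg(e)} = \prod_P(1 - u^{\deg P}) = 1 - qu$, obtained as the reciprocal of $\sum_{e \in V}u^{\deg(e)} = (1-qu)^{-1}$. Reading off coefficients, $\sum_{\deg(e)=j}\mu(e)$ equals $1$ for $j=0$, equals $-q$ for $j=1$, and vanishes for every $j \geq 2$. Grouping the truncated sum by $\deg(e)=j$, only the $j=0$ and $j=1$ terms can survive, and both are included precisely when $\lfloor d/n\rfloor \geq 1$, i.e. $d \geq n$; this yields $1 + (-q)q^{-n} = 1 - q^{1-n}$ and hence $P_d = q^d(1-q^{1-n}) = q^d - q^{d-n+1}$. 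Multiplying by $q-1$ and rewriting $1-q^{1-n} = \zeta(n)^{-1}$ produces both stated forms.

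The main obstacle, and really the only substantive point, is recognizing that the degree-truncation of the M\"obius sum introduces no error for $d \geq n$: this is what makes the count exact rather than asymptotic and explains why $d \geq n$ is the sharp threshold in the statement. Everything else is the routine bookkeeping of the sieve together with the Euler-product evaluation already used in the proof of Lemma~\ref{L:count}.
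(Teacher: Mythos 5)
Your proof is correct, but it is organized differently from the paper's. The paper never runs a sieve: it writes the generating function $\sum_d \frac{\#\cF_d}{q-1}t^d$ directly as an Euler product $\prod_P(1+t^{\deg P}+\cdots+t^{(n-1)\deg P})=\prod_P\frac{1-t^{n\deg P}}{1-t^{\deg P}}$ (unique factorization: an $n$th power-free monic polynomial is a choice of exponent in $\{0,\dots,n-1\}$ at each prime), simplifies this to $\frac{1-qt^n}{1-qt}$ via the zeta Euler product, and reads off the coefficient of $t^d$, which equals $q^d-q^{d-n+1}$ exactly when $d\geq n$. You instead work coefficient by coefficient: Möbius sieve, interchange of summation, and then the key observation that $\sum_{\deg e = j}\mu(e)$ vanishes for $j\geq 2$ (equivalently, $\sum_e \mu(e)u^{\deg e}=1-qu$), so the degree-truncated Möbius sum is \emph{exact} once $d\geq n$. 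The two arguments are algebraically equivalent --- your identity $\sum_e\mu(e)u^{\deg e}=1-qu$ is the reciprocal of the paper's $\frac{1}{1-qu}$, and your sieve-plus-truncation is precisely the coefficient extraction from $(1-qt^n)\cdot\frac{1}{1-qt}$ --- but they read quite differently. Your route has the virtue of being the natural continuation of the paper's own proof of Lemma~\ref{L:count} (the same sieve, but now with the error term identified as identically zero, which also explains why $d\geq n$ is the sharp threshold and why $\#\cF_d=q^d(q-1)$ for $d<n$); the paper's route buys a cleaner computation with no truncation bookkeeping, and it generalizes to the motivic/more general setting cited after the lemma.
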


\begin{proof}
 By counting all monic polynomials of degree $d$, we have
$$\frac{1}{1-qt}=\sum_{d\geq 0} (qt)^d =\prod_{\substack{P \in \mathbb{F}_{q}[x]: \text{ P is monic irreducible}}}
\frac{1}{1-t^{\deg(P)}}.$$
By counting $n$th power-free monic polynomials, we then have
\begin{align*}
 \sum_d \frac{\# \cF_d}{q-1} t^d=
&\prod_{\substack{P \in \mathbb{F}_{q}[x]: \text{ P is monic irreducible}}}(1 + t^{\deg(P)}
+\dots + t^{(n-1)\deg(P)})\\
=&\prod_{\substack{P \in \mathbb{F}_{q}[x]: \text{ P is monic irreducible}}}
\frac{1-t^{n\deg(P)}}{1-t^{\deg(P)}}\\=&\frac{1-qt^n}{1-qt}.
\end{align*}
We conclude that $\# \cF_d=(q-1)(q^d-q^{d-n+1})=q^d(q-1)/\zeta(n)$ for $d\geq n.$
\end{proof}

See also \cite[Proposition 5.9(a)]{vakil-wood}, which gives a much more general proof of this kind of identity (with the line
replaced by any variety), and without using the Euler product.

\subsection{Proof of Theorem~\ref{T:nfree}}

We are now ready to prove our main theorem, which we do by reducing it to the interpolation counts of
Lemma~\ref{L:counts}.
\begin{proof}[Proof of Theorem~\ref{T:nfree}]

Let $\phi(s_i,k)$ (respectively 
$\tilde{\phi}(s_i,k)$) be the number 
 of $a_i\in \F_q^*$ such that for any $f\in \cF$ with $(x-x_i)^{s_i}||f$, we have that $\frac{f}{(x-x_i)^{s_i}}|_{x=x_i}=a_i$ implies
that $C_f$ (respectively $\tilde{C_f}$) has $k$ points with $x=x_i$.

Applying Lemma~\ref{L:counts}, we have
\begin{align*}
 &\# \{f\in \cF_d | \textrm{ $C_f$ has $k_i$ points with $x=x_i$ for all $i$}\}\\
=& \sum_{0\leq s_1,\dots, s_q \leq n-1} 
\# \{f\in \cF^{\s}_d | \textrm{ $C_f$ has $k_i$ points with $x=x_i$ for all $i$}\}\\
=& \sum_{0\leq s_1,\dots, s_q \leq n-1}  \prod_{i=1}^q {\phi}(s_i,k_i)
\# \{f\in \cF^{\s}_d | \textrm{ $\frac{f}{(x-x_i)^{s_i}}|_{x=x_i}=a_i$ for all $i$}\}\\
=& \sum_{0\leq s_1,\dots, s_q \leq n-1}  \prod_{i=1}^q {\phi}(s_i,k_i)
\left( \frac{q^{d-\sum{s_i}-q}(q-1)}{\zeta(n)(1-q^{-n})^{q}} + O(q^{(d-\sum{s_i})/n+1}) \right)\\
=& \frac{q^{d-q}(q-1)}{\zeta(n)(1-q^{-n})^{q}}
\sum_{0\leq s_1\leq n-1}  {\phi}(s_1,k_1)q^{-s_1} \cdots \sum_{0\leq s_q\leq n-1}  {\phi}(s_q,k_q)q^{-s_q}
+ O(n^qq^{q+d/n+1}),
\end{align*}
and similarly for $\tilde{C_f}$ and $\tilde{\phi}$.

We now determine $\phi(s_i,k)$ and $\tilde{\phi}(s_i,k)$.
If a degree $d$ polynomial $f$ in $\F_q[x]$ is an $r$th power in $\bar{\F_q}[x]$ for some $r>1$, then $f=cg^r$
for some $c\in \F_q$ and monic $g\in \F_q[x]$, and so there are at most $q^{d/r+1}$ of these polynomials $f$.
Using  Lemma~\ref{L:dem}, we see that in the limit as $d\ra\infty$, these
 polynomials will not contribute to the total proportion.
By Lemma~\ref{L:irred}, this gives that the geometrically reducible $C_f$ do not contribute in 
the $d\ra\infty$ limit, giving the second statement of the theorem.
We now  consider only $f$ for which the power of $f$ over $\bar{\F_q}$ is relatively prime to $m$.

For $C_f$, if $s_i=0$, then $\frac{q-1}{(m,q-1)}$ of the $q-1$ possible values
 of $a_i$ (more specifically, the $m$th powers) give $k=(m,q-1)$ 
(the number of $m$th roots of an $m$th power in $\F_q^*$, i.e. choices for $y$ given $x=x_i$).
The other
$(q-1)(1-\frac{1}{(m,q-1)})$ values of  $a_i$ give $k=0.$ 
For $C_f$, if $s_i\geq 1$, then all values of $a_i$ give $k=1$ (as necessarily $y=0$).

For $\tilde{C_f}$, then $\frac{q-1}{(m,s_i,q-1)}$ of the $q-1$ possible values of $a_i$ give $k=(m,s_i,q-1)$ and the other
$(q-1)(1-\frac{1}{(m,s_i,q-1)})$ values of  $a_i$ give $k=0$ by Lemma~\ref{L:howmany}.

The first statement of the theorem follows, using Lemma~\ref{L:dem}.

\end{proof}

\subsection*{Acknowledgements} 

We would like to thank Alina Bucur for useful conversations. 
 Theorem~\ref{T:sqfree} was given as a conjecture by E. Holzhausen and M. Willer as a result of work supported by
NSF grant
 DMS-0838210. 
M. M. Wood was supported in this work by an American Institute of Mathematics Five-Year Fellowship and NSF grant DMS-1147782.  
G. Cheong and A. Zaman were supported by NSF grants DMS-1147782
 and 
 DMS-0838210.

\bibliographystyle{alpha}
\bibliography{myrefs.bib}

\end{document}